\documentclass[hidelinks,onefignum,onetabnum]{siamart251216}

\usepackage{graphicx}%
\usepackage{amsmath,amssymb,amsfonts}%
\usepackage{tikz}
\usetikzlibrary{datavisualization}
\usetikzlibrary{datavisualization.formats.functions}
\usepackage{pgfplots}

\newsiamremark{remark}{Remark}
\newsiamremark{problem}{Problem}
\crefname{hypothesis}{Hypothesis}{Hypotheses}
\newsiamthm{claim}{Claim}
\newsiamremark{fact}{Fact}
\crefname{fact}{Fact}{Facts}

\newcommand{\dm}{\mathcal{C}}

\newcommand{\X}{\mathbf{x}}
\headers{Maximal Multiplicity Method}{M. Arnold, O. Gendelman and V. Zharnitsky}

\title{Maximal Multiplicity Method for Optimal Damping Problem. \thanks{Submitted to the editors DATE.
\funding{O.G. would like to acknowledge funding from Israel Science Foundation, grant 3085/25. M.A. was partially supported by the Simons Foundation grant MPS-TSM-00013259. }}}

\author{Maxim Dm.  Arnold\thanks{ University of Texas at Dallas, Richardson, Texas, USA
  (\email{maxim.arnold@utdallas.edu}),}
\and Oleg V. Gendelman \thanks{Technion-Israeli Institute of Technology, Haifa, Israel
  (\email{ovgend@technion.ac.il}),} 
\and Vadim Zharnitsky \thanks{University of Illinois at Urbana-Champaign, Urbana, Illinois, USA (\email{vzh@illinois.edu}).}}

\begin{document}
\maketitle

\begin{abstract}We study the problem of designing optimal damping for a system of $n$ coupled linear oscillators. Using the Weyl--Horn theorem, we construct an explicit damping matrix that realizes the optimal asymptotic decay rate of the system. We prove that this decay rate is sharp and is determined by the geometric mean of the natural frequencies. The resulting damping strategy is shown to outperform the classical Rayleigh (proportional) damping approach. We conclude by characterizing the parameter regimes in which the optimal damping matrix necessarily ceases to be positive definite and discuss the implications of this phenomenon.
\end{abstract}

\begin{keywords}
Optimal Damping, Horn-Weyl theorem, Singular Value Decomposition
\end{keywords}

\begin{MSCcodes}
15A18, 70J25, 37C75
\end{MSCcodes}

\section{Introduction} \label{sec:intro}

Consider the textbook example of a unit mass damped oscillator with one degree of freedom,
\[
\ddot x + 2c \dot x + \omega^2 x = 0,
\]
where $x=x(t)$ is a scalar function, the damping coefficient satisfies $c\ge0$, and
$\omega > 0$ is the natural frequency. 

\begin{figure}[!h]
\centering
\begin{tikzpicture}[scale=0.7,
    thick,
    spring/.style={decorate,decoration={
        coil,
        aspect=0.7,
        segment length=3mm,
        amplitude=1.5mm,
        pre length=2.5mm,
        post length=2.5mm}},
    mass/.style={
        draw,
        fill=gray!25,
        minimum width=1.1cm,
        minimum height=0.9 cm
    },
    >=stealth
]

\draw[very thick] (0,-1.15) -- (0,1.15);
\foreach \y in {-1.05,-0.75,...,1.05}
    \draw (-0.25,\y-0.18) -- (0,\y);


\node[mass] (M) at (5,0) {};


\draw[spring] (0,0.38) -- (M.west |- 0,0.38);

\draw (0,-0.38) -- (1.35,-0.38);

\draw (1.35,-0.62) -- (1.35,-0.14);
\draw (1.35,-0.62) -- (2.25,-0.62);
\draw (1.35,-0.14) -- (2.25,-0.14);

\draw (1.65,-0.38) -- (2.65,-0.38);
\draw (1.65,-0.58) -- (1.65,-0.18);

\draw (2.65,-0.38) -- (M.west |- 0,-0.38);


\end{tikzpicture}
\caption{Spring-mass system with one degree of freedom.}
\end{figure}

Looking for solutions of the form
$x(t)=e^{rt}$ leads to the characteristic equation
\[
r^2+2cr+\omega^2=0.
\]

The two characteristic exponents $r_1$ and $r_2$ satisfy
\(
r_1r_2=\omega^2.
\)
Since their product is fixed, the real parts of the exponents cannot both lie to the left of $-\omega$ in the complex plane. Also, the exponents cannot be of the form $r_{1,2} = -\omega \pm i b$ for $b\neq 0$. Hence, the fastest possible decay rate is attained by choosing
 $r_1=r_2 =-\omega$ if it is possible to find a corresponding damping $c$.  In this one-dimensional case it is clearly possible to achieve  by taking 
\(
c=\omega .
\)

\begin{figure}[!h]
    \centering
   \begin{tikzpicture}[scale=0.75]
\begin{axis}[
    axis lines = box,
    xlabel = {$\dfrac{2 c}{\omega} $},
    ylabel = {$\dfrac{\text{Re}(r)}{\omega}$},
    xmin = 0, xmax = 2.0,
    ymin = -2.0, ymax = 0,
    domain = 0:2,
    samples = 200,
    grid = both,
    grid style = {dashed, gray!30},
    every axis x label/.style={at={(ticklabel* cs:1)},anchor=west},
    every axis y label/.style={at={(ticklabel* cs:1)},anchor=south},
]

\addplot [red, thick, domain=0:1] {-x};
\addplot [red, thick, domain=1:4, forget plot] {(-x + sqrt(x^2 - 1))/1};
\addplot [red, thick, domain=1:4] {(-x - sqrt(x^2 - 1))/1};
\end{axis}
\end{tikzpicture}
    \caption{Real parts of the characteristic exponents as a function of damping parameter, normalized by the frequency.}
    \label{fig:intro}
\end{figure}

Then  the characteristic polynomial factors as
$$
r^2+2\omega r+\omega^2=(r+\omega)^2
$$
and every solution decays at the maximal exponential rate $e^{-\omega t}$. By contrast, for any other choice $c\neq\omega$, the quadratic formula shows that at least one characteristic exponent satisfies
\[
\Re(r)>-\omega,
\]
so the asymptotic decay is necessarily slower. Thus the critically damped case $c=\omega$ is optimal. 

Despite the importance of optimal damping in mechanical structures there appear no systematic results on optimality in the systems of higher degrees of freedom. Our goal is fill this gap by providing the most general optimality condition.

In the present paper we investigate a higher-dimensional analogue of the optimization problem discussed above. More precisely, we consider an $n$-degree-of-freedom mechanical system consisting of a network of masses connected by linear springs and dampers. Its dynamics is governed by the system
\begin{equation}\label{eq:main1}
M \ddot{\X}+2C\dot{\X}+K\X=0,
\end{equation}
where $\X\in\mathbb{R}^n$ is the displacement vector, $M$ is an $n\times n$ symmetric positive definite mass matrix, $K$ is an $n\times n$ positive definite stiffness matrix, and $C$ is an $n\times n$ symmetric damping matrix, which is not assumed to be positive definite.

A natural question, of both theoretical and practical significance, is the following.

\begin{problem}
Given $M$ and $K$, determine the damping matrix $C$ that yields the fastest decay of vibrations.
\end{problem}

Since the system \eqref{eq:main1} has $2n$ characteristic exponents 
\[
\lambda_1,\lambda_2,\ldots,\lambda_{2n},
\]
the asymptotic decay rate is governed by the largest of their real parts. Consequently, the above optimization problem can be formulated as the minimax problem
\[
r_{\mathrm{cr}}
=
\min_C
\,
\max_{1\le j\le 2n}
\Re(\lambda_j).
\]
 Since for any positive definite $C$, all the eigenvalues have negative real parts, it follows that $r_{\mathrm{cr}} < 0$.

Our main result asserts that
\begin{equation}
\label{eq:main*}
r_{\mathrm{cr}}
=
-
\sqrt[2n]{\dfrac{\det(K)}{\det(M)}} 
\end{equation}
and this bound is sharp.

In Section~\ref{sec:prelim}, we reformulate equation~\eqref{eq:main1} in a convenient form, introduce the necessary notation, and establish a simple a priori lower bound for $r_{\mathrm{cr}}$. We then consider in detail the first non-trivial example of the system \eqref{eq:main1} with two degrees of freedom.   

Section~\ref{sec:algorithm} presents an explicit construction of a damping matrix $C$ that attains the optimal value of the minimax problem.

In Section~\ref{sec:examples}, we illustrate the method by working out the cases of systems with two and three degrees of freedom.

Finally, Section~\ref{sec:applications} discusses several practical applications of the proposed approach and highlights some intriguing structural properties of the optimal damping matrix $C$.

\section{Preliminaries} \label{sec:prelim}

To simplify the analysis, we begin by reducing the system \eqref{eq:main1} to a canonical form. A standard result on the simultaneous diagonalization of quadratic forms allows us to perform a linear change of coordinates under which the mass matrix $M$ becomes the identity matrix, while the stiffness matrix $K$ becomes diagonal (denoted for convenience by $\Omega^2$), with entries
\[
\omega_1^2 \geq \omega_2^2 \geq \cdots \geq \omega_n^2.
\]
For notational convenience, we continue to denote the transformed variables by the same symbols. The equation of motion therefore takes the form
\begin{equation}\label{eq:main}
I \ddot \X + 2\dm\dot \X + \Omega^2\X = 0,
\end{equation}
where $I$ is the identity matrix,  $\Omega^2$ is diagonal with $\Omega^2 = {\rm diag}[\omega_1^2, \omega_2^2, ..., \omega_n^2]$ and $\dm$ is an arbitrary symmetric matrix\footnote{not necessarily positive definite}.

\begin{lemma}
Let
$
\omega_*:= \sqrt[2n]{\det(\Omega^2)} =
\left(
\omega_1\omega_2\cdots\omega_n
\right)^{1/n}
$.
Under the above normalization, the optimal asymptotic decay rate satisfies
\[
r_{\mathrm{cr}}
\ge
-\omega_*.
\]
\end{lemma}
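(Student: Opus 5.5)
The plan is to generalize the one-dimensional argument from the introduction: the constant term of the characteristic polynomial is fixed by $\Omega^2$ regardless of $\dm$.

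First, I would write \eqref{eq:main} as a first-order system $\dot{\mathbf y}=A\mathbf y$ with
\[
A=\begin{pmatrix}0 & I\\ -\Omega^2 & -2\dm\end{pmatrix}.
\]
The characteristic exponents $\lambda_1,\dots,\lambda_{2n}$ are the roots of $\det(\lambda^2 I+2\lambda\dm+\Omega^2)=0$, which is a monic polynomial of degree $2n$ in $\lambda$. Setting $\lambda=0$ gives its constant term $\det(\Omega^2)$. By Vieta, or equivalently by computing $\det A$ through the block structure,
\[
\prod_{j=1}^{2n}\lambda_j=\det(\Omega^2)=\omega_*^{2n},
\]
and this holds independently of $\dm$.

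Second, I would argue by contradiction. Suppose some symmetric $\dm$ gives $\max_j\Re(\lambda_j)<-\omega_*$. Then every exponent satisfies $|\lambda_j|\ge|\Re\lambda_j|>\omega_*$. Since the polynomial has real coefficients, the nonreal roots occur in conjugate pairs, so the product is positive and
\[
\prod_j|\lambda_j|=\prod_j\lambda_j=\omega_*^{2n}.
\]
On the other hand, $\prod_j|\lambda_j|>\omega_*^{2n}$, which is a contradiction. Hence for every $\dm$ we have $\max_j\Re\lambda_j\ge-\omega_*$. Taking the infimum over $\dm$ yields $r_{\mathrm{cr}}\ge-\omega_*$.

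The only point needing care is the identification of the constant term, namely that $\det(\lambda^2I+2\lambda\dm+\Omega^2)$ is monic of degree $2n$ and equals $\det(\Omega^2)$ at $\lambda=0$. Both facts are immediate. No positivity of $\dm$ is required, so the bound applies to arbitrary symmetric damping, as the normalization intends.
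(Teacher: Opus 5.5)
Your proposal is correct and follows the paper's proof: Vieta's formula gives $\prod_j\lambda_j=\det(\Omega^2)=\omega_*^{2n}$ independently of $\dm$, and assuming every $\Re\lambda_j<-\omega_*$ forces $\prod_j|\lambda_j|>\omega_*^{2n}$, a contradiction. Your conjugate-pair remark is harmless but not needed, since $\prod_j|\lambda_j|=\bigl|\prod_j\lambda_j\bigr|$ holds regardless and the product is already known to equal the positive number $\det(\Omega^2)$.
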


\begin{proof}
Seeking solutions of the form
$
\X(t)=e^{rt}\mathbf{v}
$, 
where $\mathbf{v}\neq0$ is a constant vector, leads to the characteristic equation
\begin{equation}
\label{eq:char}
P(r):=\det\!\left(I r^2+2\dm r+\Omega^2\right)=0.
\end{equation}
Its $2n$ roots are precisely the characteristic exponents 
$r_1,r_2,\ldots,r_{2n}$. Since the characteristic polynomial is monic of degree $2n$, Vieta's formula yields
\[
\prod_{j=1}^{2n} r_j
=
\det(\Omega^2)
=
\prod_{j=1}^n \omega_j^2
=
\omega_*^{\,2n}.
\]

Suppose, to the contrary, that $
\Re(r_j)<-\omega_*$
 for every $j$. Then $
|r_j|
\ge
|\Re(r_j)|
>
\omega_*$,
and therefore
\[
\prod_{j=1}^{2n}|r_j|
>
\omega_*^{\,2n}.
\]
On the other hand,
\[
\prod_{j=1}^{2n}|r_j|
=
\left|
\prod_{j=1}^{2n}r_j
\right|
=
\omega_*^{\,2n},
\]
a contradiction. Hence at least one root
satisfies
\[
\Re(r_j)\ge-\omega_*,
\]
which proves the lemma.
\end{proof}

Our main result (Theorem \ref{th:main}) shows that the lower bound established above is, in fact, attainable.

\begin{remark}
    For the general case, given by \eqref{eq:main1}, the corresponding treshhold is given by \eqref{eq:main*}. 
\end{remark}

The general construction of an optimal damping matrix will be given in the next section. Before presenting it, however, we examine in detail the simplest nontrivial case of two degrees of freedom. Besides illustrating the main ideas, this example reveals that there exists a unique symmetric matrix $\dm$ that realizes the optimal decay rate.

\subsection{Two degrees of freedom}

A preliminary version of this result was reported in \cite{arnold2026critical}.

For $n=2$, evaluating the determinant in \eqref{eq:char} gives
\begin{equation}
\label{eq:P}
\begin{aligned}
P(r)
={}&
r^4
+2\operatorname{tr}(\dm)r^3
+\bigl(\operatorname{tr}(\Omega^2)+4\det \dm\bigr)r^2
\\
&\quad
+\bigl(2c_{11}\omega_2^2+2c_{22}\omega_1^2\bigr)r
+\det(\Omega^2).
\end{aligned}
\end{equation}

Motivated by the one-dimensional critically damped oscillator, we seek a quadruple real root. As shown below, this choice realizes the fastest possible decay. Thus we require
\[
P(r)=(r+r_0)^4.
\]
Since
\(
P(0)=\det(\Omega^2)=\omega_1^2\omega_2^2,
\)
we necessarily have
\(
r_0=\sqrt{\omega_1\omega_2}.
\)
Comparing the coefficients of
\[
(r+r_0)^4
=
r^4+4r_0r^3+6r_0^2r^2+4r_0^3r+r_0^4
\]
with \eqref{eq:P}, we obtain the unique diagonal entries
\[
c_{11}
=
\frac{2\omega_1\sqrt{\omega_1\omega_2}}
{\omega_1+\omega_2},
\qquad
c_{22}
=
\frac{2\omega_2\sqrt{\omega_1\omega_2}}
{\omega_1+\omega_2},
\]
while the off-diagonal entry is determined up to sign:
\[
c_{12}
=
\pm
\frac{(\omega_1-\omega_2)^2}
{2(\omega_1+\omega_2)}.
\]
Hence the damping matrices producing the quadruple root
\(
r=-\sqrt{\omega_1\omega_2}
\)
are precisely
\begin{equation}
\label{eq:n2caseC}
\dm=
\begin{pmatrix}
\dfrac{2\omega_1\sqrt{\omega_1\omega_2}}
{\omega_1+\omega_2}
&
\displaystyle
\pm\dfrac{(\omega_1-\omega_2)^2}
{2(\omega_1+\omega_2)}
\\[10pt]
\displaystyle
\pm\dfrac{(\omega_1-\omega_2)^2}
{2(\omega_1+\omega_2)}
&
\dfrac{2\omega_2\sqrt{\omega_1\omega_2}}
{\omega_1+\omega_2}
\end{pmatrix}.
\end{equation}

\begin{remark}\label{rem:2dof}
The matrix \eqref{eq:n2caseC} is positive definite only for a restricted range of frequency ratios. Indeed,
\[
\det \dm >0
\quad\Longleftrightarrow\quad
3-\sqrt8
<
\frac{\omega_1}{\omega_2}
<
3+\sqrt8.
\]
At the boundary values
\[
\frac{\omega_1}{\omega_2}=3\pm\sqrt8
\]
one has $\det \dm=0$, whereas outside this interval $\det \dm<0$ and hence $\dm$ is sign indefinite.
\end{remark}

\subsection{Passive damping elements}

We now distinguish damping matrices that can be realized using only passive damping elements. In the standard mechanical interpretation, a damper connecting the $i$th mass to a fixed support contributes to the diagonal entry $C_{ii}$ of matrix $C$ from \eqref{eq:main1}, while a damper connecting the $i$th and $j$th masses contributes equally to $C_{ii}$ and $C_{jj}$ and with the opposite sign to $C_{ij}=C_{ji}$.

\begin{definition}
\label{def:passive}
A symmetric damping matrix $C$ is called \emph{passive} if it can be written in the form
\begin{equation}
\label{eq:passive}
\begin{aligned}
C_{ii}
&=
a_i+\sum_{j\ne i}b_{ij},
\\
C_{ij}
&=
-b_{ij},
\qquad i\ne j,
\end{aligned}
\end{equation}
where
\[
a_i>0,
\qquad
b_{ij}=b_{ji}\ge0.
\]
We call the corresponding system \eqref{eq:main} passively damped.
\end{definition}

Thus a passive damping matrix has nonpositive off-diagonal entries and strictly positive row sums:
\[
\sum_j C_{ij}=a_i>0.
\]
Conversely, any symmetric matrix with these two properties has the representation \eqref{eq:passive}.

Note that every passive damping matrix is positive definite. Indeed, for any
$\mathbf{x}\in \mathbb{R}^n$,
\[
{\mathcal R}(\mathbf{x})=\mathbf{x}^T C\mathbf{x}
=
\sum_{i=1}^n a_i x_i^2
+
\sum_{i<j}b_{ij}(x_i-x_j)^2.
\]
Since $a_i>0$ and $b_{ij} \geq 0$ , the right-hand side is strictly positive for every
$\mathbf{x}\ne0$\footnote{In Lagrangian mechanics context, the quadratic form ${\mathcal R}(\mathbf{x})$  is called Rayleigh dissipation function}.

The converse is false: positive definiteness alone does not imply passivity. For example, matrix
\(
\begin{pmatrix}
1&1\\
1&2
\end{pmatrix}
\)
is positive definite, but it is not passive because its off-diagonal entries are positive.

Thus for damping matrix the property of being passive is stronger condition than being positive definite. 
For instance, due to the remark \ref{rem:2dof}, while sufficiently large separation between the two natural frequencies prevents the optimal damping from being positive definite the optimal decay in this case cannot be achieved using only passive damping elements. 


\subsection{Relation to a physical system}

We now interpret the preceding two-dimensional result in terms of a mechanical system with two degrees of freedom. Consider
\begin{equation}
\label{eq:physical2dof}
\begin{aligned}
\ddot x_1
+2a_1\dot x_1
+2b_{12}(\dot x_1-\dot x_2)
+\omega_1^2x_1
&=0,
\\
\ddot x_2
+2a_2\dot x_2
+2b_{12}(\dot x_2-\dot x_1)
+\omega_2^2x_2
&=0.
\end{aligned}
\end{equation}
Here $a_1$ and $a_2$ describe damping of the individual oscillators, while $b_{12}$ describes the damping element coupling them.

In matrix form, the damping matrix is
\[
C=
\begin{pmatrix}
a_1+b_{12} & -b_{12}\\
-b_{12} & a_2+b_{12}
\end{pmatrix}.
\]
Hence
\begin{equation}
\label{eq:physicalC}
a_1=c_{11}+c_{12},
\qquad
a_2=c_{22}+c_{12},
\qquad
b_{12}=-c_{12}.
\end{equation}

For the optimal matrix \eqref{eq:n2caseC}, passivity requires $c_{12}<0$. We therefore choose
\[
b_{12}=-c_{12}
=
\frac{(\omega_1-\omega_2)^2}
{2(\omega_1+\omega_2)}
\ge0.
\]
The remaining damping coefficients are then
\[
a_1
=
\frac{
4\omega_1\sqrt{\omega_1\omega_2}
-(\omega_1-\omega_2)^2
}
{2(\omega_1+\omega_2)}
\]
and
\[
a_2
=
\frac{
4\omega_2\sqrt{\omega_1\omega_2}
-(\omega_1-\omega_2)^2
}
{2(\omega_1+\omega_2)}.
\]

Assume, without loss of generality, that
\[
\omega_1\ge\omega_2>0.
\]
Then $a_1\ge a_2$, and therefore the optimal damping is passive precisely when
\(
a_2>0,
\)
or
\(
4\omega_2\sqrt{\omega_1\omega_2}
>
(\omega_1-\omega_2)^2.
\)
Introducing the frequency ratio
\(
\rho=\dfrac{\omega_2}{\omega_1}\in(0,1],
\)
this condition becomes
\[
16\rho^3>(1-\rho)^4.
\]
The equation
\(
16\rho^3=(1-\rho)^4
\)
has a unique solution in $(0,1)$,
\(
\rho_* \approx 0.2638.
\)
Consequently, the optimal decay rate can be realized by passive damping elements if and only if
\[
\frac{\omega_2}{\omega_1}>\rho_*.
\]
At $\omega_2/\omega_1=\rho_*$ one has $a_2=0$, while for smaller frequency ratios the optimal damping necessarily requires an active damping element.

\subsection{Comparison with Rayleigh damping}
For systems with two or more degrees of freedom, a common practical approach is to construct damping matrix that is diagonal in the modal representation. This approach is usually called \emph{proportional} (or \emph{Rayleigh}) damping, in which $C$ is chosen from the class of symmetric matrices commuting with $K$.  The two matrices can then be simultaneously diagonalized, reducing the problem to independent one-dimensional oscillators. Optimizing each mode separately, as explained in the Introduction,  yields
\[
r_{\mathrm{cr}}
=
\max(-\omega_1,-\omega_2)
=
-\omega_2,
\]
since $\omega_1\ge\omega_2$.

\begin{remark}
The above comparison illustrates the advantage of allowing non-proportional damping. In a one-degree-of-freedom system, the optimal decay rate is determined entirely by the stiffness of the spring: the larger the stiffness, the faster the convergence to equilibrium under optimal damping.

For a two-degree-of-freedom system with, for example,
\[
\omega_1=100,
\qquad
\omega_2=1,
\]
the proportional damping approach limits the decay rate of the entire system to
\[
r_{\mathrm{cr}}=-\omega_2=-1,
\]
because the slowest mode dominates the asymptotic behavior. In contrast, by introducing coupling through the damping matrix, one can effectively redistribute the influence of the two stiffnesses across the system. Our construction shows that the optimal decay rate becomes
\[
r_{\mathrm{cr}}
=
-\sqrt{\omega_1\omega_2}
=
-10,
\]
which is an order of magnitude faster than the proportional damping limit.
\end{remark}

\section{Maximal Multiplicity Method}
\label{sec:algorithm}

We now describe general construction of an optimal damping matrix.
As before, we assume without loss of generality that
\(
\Omega^2~
=~
\operatorname{diag}
(\omega_1^2,\ldots,\omega_n^2)\),
\(
\omega_1\ge\omega_2\ge\cdots\ge\omega_n>0,
\)
and normalize so that
\(
\prod \omega_i=1.
\) 

The main idea is to factor the stiffness matrix as
\begin{equation}
\label{eq:omega}
\Omega^2=AA^T
\end{equation}
and set
\begin{equation}
\label{eq:damping}
\dm=\frac12(A+A^T).
\end{equation}

Thus, if all eigenvalues of $A$ are equal to $1$, then
\[
P(r)
=\det(Ir^2+2\dm r+\Omega^2)
=
\det(I r+A)\det(I r+A^T)
=
(r+1)^{2n}.
\]
It remains to construct a real matrix $A$ satisfying
\(
AA^T=\Omega^2
\)
and having the single eigenvalue $1$. Suppose that a real matrix $B$ has singleton spectrum $\{1\}$
and singular values
\(
\omega_1,\ldots,\omega_n.
\)
Then $BB^T$ and $\Omega^2$ are symmetric positive definite matrices with the same spectrum. Hence there exists an orthogonal matrix $U$ such that
\(
UBB^TU^T=\Omega^2.
\)
Setting
\(
A=UBU^T,
\)
the corresponding damping matrix is given by
\begin{equation}
\label{eq:dampingU}
\dm=\frac12U(B+B^T)U^T.
\end{equation}

Thus the problem reduces to constructing a real matrix with prescribed singular values
\(
\omega_1,\ldots,\omega_n
\)
and all eigenvalues equal to $1$. For this purpose we recall the theorem of Weyl and Horn \cite{horn}.

\begin{definition}
A pair of $n$-tuples
\(
(\lambda_1,\ldots,\lambda_n)\),
and \(
(\alpha_1,\ldots,\alpha_n),
\)
where $\lambda_i\in\mathbb C$ and $\alpha_i\ge0$, is called \emph{allowable} if there exists a matrix whose eigenvalues are $\lambda_i$ and whose singular values are $\alpha_i$.
\end{definition}

\begin{theorem}[Horn]
Assume that
\(
|\lambda_1|\ge\cdots\ge|\lambda_n|\),
\(
\alpha_1\ge\cdots\ge\alpha_n\ge0.
\)
The pair
\(
(\lambda_1,\ldots,\lambda_n),
\) \(
(\alpha_1,\ldots,\alpha_n)
\)
is allowable if
\[
\alpha_1\cdots\alpha_k
\ge
|\lambda_1\cdots\lambda_k|,
\qquad
k=1,\ldots,n,
\]
with equality for $k=n$.
\end{theorem}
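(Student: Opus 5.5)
Since the statement is the sufficiency half of Horn's theorem, I would prove it by induction on $n$, building an upper triangular matrix with diagonal $\lambda_1,\dots,\lambda_n$ and the prescribed singular values. For the application in this paper all $\lambda_i=1$, so it is enough to treat real, positive moduli. In general one uses $|\lambda_i|$ and multiplies by a diagonal unitary of phases at the end. That step changes neither the eigenvalues' moduli nor the singular values.

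\emph{Base case $n=2$.} I would look for $T=\begin{pmatrix}\lambda_1 & x\\ 0&\lambda_2\end{pmatrix}$. Its singular values $s_1\ge s_2$ satisfy $s_1s_2=|\lambda_1\lambda_2|$ and $s_1^2+s_2^2=|\lambda_1|^2+|\lambda_2|^2+|x|^2$. Set $p=\alpha_1\alpha_2=|\lambda_1\lambda_2|$. The function $t\mapsto t^2+p^2/t^2$ is increasing for $t\ge\sqrt p$, and $\alpha_1\ge|\lambda_1|\ge\sqrt p$, so $\alpha_1^2+\alpha_2^2\ge|\lambda_1|^2+|\lambda_2|^2$. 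Hence a unique $|x|\ge0$ matches the Frobenius norm, and $T$ has singular values $\alpha_1,\alpha_2$. The same argument works whenever both $|\lambda_i|$ lie in $[\alpha_2,\alpha_1]$, without any ordering between $\lambda_1$ and $\lambda_2$. I will need this in the inductive step.

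\emph{Inductive step.} First, $\alpha_n\le|\lambda_n|\le|\lambda_1|\le\alpha_1$. This follows by dividing the equality at $k=n$ by the inequality at $k=n-1$. So there is an index $k$ with $\alpha_k\ge|\lambda_1|\ge\alpha_{k+1}$. Put $\mu=\alpha_k\alpha_{k+1}/|\lambda_1|\in[\alpha_{k+1},\alpha_k]$. By the base case there is a $2\times2$ matrix $T$ with eigenvalues $\lambda_1,\mu$ and singular values $\alpha_k,\alpha_{k+1}$. Now consider the $(n-1)$-tuples
\[
(\lambda_2,\dots,\lambda_n),\qquad(\alpha_1,\dots,\alpha_{k-1},\mu,\alpha_{k+2},\dots,\alpha_n),
\]
and check that they satisfy the product inequalities, with equality at the end. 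The equality is immediate, since $\mu\,|\lambda_1|=\alpha_k\alpha_{k+1}$. For the inequalities there are two ranges. When $j<k$, use $\alpha_1\cdots\alpha_j\ge|\lambda_1|^j\ge|\lambda_2\cdots\lambda_{j+1}|$, which holds because $\alpha_i\ge\alpha_k\ge|\lambda_1|$ for $i\le k$. When $j\ge k$, the product of the first $j$ entries equals $\alpha_1\cdots\alpha_{j+1}/|\lambda_1|\ge|\lambda_2\cdots\lambda_{j+1}|$ by the original inequality. The new tuple may need re-sorting, but $\mu$ sits between its neighbours, so the ordering is preserved. I expect this bookkeeping to be the main obstacle, together with making sure the ordering hypotheses of the induction hypothesis hold.

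\emph{Assembly.} The induction hypothesis gives an $(n-1)\times(n-1)$ upper triangular matrix $R$ with diagonal $\lambda_2,\dots,\lambda_n$ and singular values $\alpha_1,\dots,\widehat{\alpha_k},\widehat{\alpha_{k+1}},\dots,\alpha_n$ together with $\mu$. Write $R=W\Sigma V^*$ with $\Sigma$ diagonal and containing $\mu$. Write $T=Q\begin{pmatrix}\lambda_1 & y\\ 0 & \mu'\end{pmatrix}Q^*$, a Schur form. Replace the entry $\mu$ of $\Sigma$ by the block $\begin{pmatrix}\lambda_1&y\\0&\mu'\end{pmatrix}$, rotated by unitaries so that its singular values become $\alpha_k,\alpha_{k+1}$. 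Then conjugate and multiply by the unitaries $W$ and $V^*$. The goal is a matrix that is unitarily equivalent to $\mathrm{diag}(\alpha_i)$ and at the same time block triangular, with $\lambda_1$ split off and the complementary block similar to $R$. Alternatively, and more cleanly, I would follow Chan--Li: take $A=\begin{pmatrix}\lambda_1&*\\0&R'\end{pmatrix}$, where $R'$ is obtained from $R$ by replacing its $\mu$-singular direction with the second row of the $2\times2$ block. Its singular values are then checked via $A^*A$. Finally, for the real case needed in \eqref{eq:dampingU}, all data are real, so every step can be carried out with real orthogonal matrices.
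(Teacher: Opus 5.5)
\paragraph{Comparison with the paper.} The paper does not prove this statement. It cites Horn for existence and, for the only instance it needs ($\lambda_i\equiv1$, $\prod\omega_i=1$), gives an explicit construction via the Kosowski--Smoktunowicz algorithm. That algorithm reorders the $s_i$ so that $p_{i-1}$ and $s_i$ straddle $1$. It then uses $2\times2$ orthogonal transformations in the planes $(i-1,i)$ to replace the diagonal pair $(p_{i-1},s_i)$ by the block $L(p_{i-1},s_i)$. You instead use the classical induction on $n$ for the general complex statement, and your reduction is correct. The choice of $k$ with $\alpha_k\ge|\lambda_1|\ge\alpha_{k+1}$, the bound $\alpha_{k+1}\le\mu\le\alpha_k$, the $2\times2$ lemma with unordered diagonal, and the check of the product conditions for the reduced tuple all go through. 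You must set aside $\lambda_1=0$, where you cannot divide. In that case all $\lambda_i=0$ and $\alpha_n=0$, and the shift with superdiagonal $\alpha_1,\dots,\alpha_{n-1}$ works.

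\paragraph{The assembly step.} This is the real content of the inductive step, and you describe it loosely and partly incorrectly. You need no Schur form or extra rotation: $T=\begin{pmatrix}\lambda_1&x\\0&\mu\end{pmatrix}$ is already triangular with singular values $\alpha_k,\alpha_{k+1}$. Write $R=W\Sigma V^*$ with $\Sigma=\operatorname{diag}(\beta_1,\dots,\beta_{n-1})$ and $\beta_k=\mu$, and set
\[
\hat T=\begin{pmatrix}\lambda_1 & x\,e_k^{T}\\ 0 & \Sigma\end{pmatrix},
\qquad
A=(1\oplus W)\,\hat T\,(1\oplus V^*)=\begin{pmatrix}\lambda_1 & x\,e_k^{T}V^*\\ 0 & R\end{pmatrix}.
\]
Rows and columns $1$ and $k+1$ of $\hat T$ carry $T$, and the rest is $\operatorname{diag}(\beta_j)_{j\ne k}$. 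Hence $A$ has singular values $\alpha_1,\dots,\alpha_n$, and $A$ is upper triangular with diagonal $\lambda_1,\dots,\lambda_n$. Three corrections follow from this:
\begin{itemize}
\item The unitaries act on opposite sides; this is not a conjugation. Conjugating by $1\oplus W$ would make the lower block $W\Sigma W^*$, whose eigenvalues are the $\beta_j$ rather than $\lambda_2,\dots,\lambda_n$.
\item The lower block must be $R$ itself, not a modified $R'$, or you lose control of $\lambda_2,\dots,\lambda_n$.
\item Your ``Chan--Li'' variant is right only when read as $R'=R$ and $*=x\,v_k^*$, with $v_k=Ve_k$.
\end{itemize}

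\paragraph{What each route buys.} In the unit case your $T$ is exactly $L(\alpha_k,\alpha_{k+1})^{T}$, since $|x|^2=(\alpha_k^2-1)(1-\alpha_{k+1}^2)$. So both constructions use the same $2\times2$ block, and both merge two singular values on opposite sides of $1$ into their product. Your recursion proves the full theorem for arbitrary complex spectra, which the paper only quotes. However, it needs the SVD of the $(n-1)\times(n-1)$ factor at every level. The paper's sequential scheme needs only $2\times2$ SVDs and outputs the unit triangular $B$ directly. Its price is that it covers only the case where all eigenvalues equal $1$.
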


We apply Horn's theorem with
$
\lambda_1=\cdots=\lambda_n=1
$ and $
\alpha_i=\omega_i.
$ The required inequalities are
\(
\omega_1\cdots\omega_k\ge1
\)
for \(k=1,\ldots,n,
\)
with equality for $k=n$. They follow immediately from
\(
\omega_1\ge\cdots\ge\omega_n>0
\), \(
\prod \omega_i=1.
\)
Hence a matrix $B$ with the required eigenvalues and singular values exists.

For an explicit construction we use the algorithm of Kosowski and Smoktunowicz \cite{smok}. It produces a unit lower triangular matrix $B$ with prescribed singular values and is based entirely on transformations of $2\times2$ blocks.

First, reorder the singular values
$\omega_1, \ldots \omega_n$, calling the new ordering,
\(
s_1,\ldots,s_n
\)
so that, for every $i=2,\ldots,n$, the partial product
\(
p_{i-1}=s_1s_2\cdots s_{i-1}
\)
and $s_i$ lie on opposite sides of $1$:
\begin{equation}
\label{eq:KSorder}
p_{i-1}\ge1\ge s_i
\qquad\text{or}\qquad
s_i\ge1\ge p_{i-1}.
\end{equation}
Such an ordering exists under the Weyl--Horn conditions.

For positive numbers $a,b$ satisfying
\[
a\ge1\ge b
\qquad\text{or}\qquad
b\ge1\ge a,
\]
define
\begin{equation}
\label{eq:Lblock}
L(a,b)
=
\begin{pmatrix}
1&0\\[2mm]
\sqrt{(a^2-1)(1-b^2)}&ab
\end{pmatrix}.
\end{equation}
The singular values of $L(a,b)$ are $a$ and $b$.

We now construct a sequence of $n\times n$ lower triangular matrices
\(
B_1,B_2,\ldots,B_n,
\)
all having the same singular values $s_1,\ldots,s_n$.

Start with
\(
B_1=\operatorname{diag}(s_1,\ldots,s_n).
\)
At the first step replace the leading block
\(
\operatorname{diag}(s_1,s_2)
\)
by
\(
L(s_1,s_2).
\)
The resulting matrix $B_2$ is lower triangular and has diagonal
\(
(1,\, s_1s_2,\, s_3,\ldots,s_n).
\)

Suppose inductively that $B_{i-1}$ has diagonal
\[
\underbrace{1,\ldots,1}_{i-2},
\quad
p_{i-1},
\quad
s_i,\ldots,s_n,
\]
where
\(
p_{i-1}=s_1\cdots s_{i-1}.
\)
Consider the $2\times2$ matrix
\(
L_i=L(p_{i-1},s_i).
\)
By \eqref{eq:KSorder}, this matrix is well defined and has singular values
$p_{i-1}$ and $s_i$. Let
\[
L_i
=
U_i
\begin{pmatrix}
p_{i-1}&0\\
0&s_i
\end{pmatrix}
V_i^T
\]
be its singular value decomposition, where $U_i$ and $V_i$ are orthogonal.

Embed $U_i$ and $V_i$ into the $(i-1,i)$ coordinate plane:
\[
Q_i
=
\operatorname{diag}(I_{i-2},U_i,I_{n-i}),
\qquad
Z_i
=
\operatorname{diag}(I_{i-2},V_i,I_{n-i}),
\]
and set
\begin{equation}
\label{eq:KSstep}
B_i=Q_iB_{i-1}Z_i^T.
\end{equation}
Since $Q_i$ and $Z_i$ are orthogonal, $B_i$ has the same singular values as $B_{i-1}$. Moreover, the transformation replaces the diagonal pair
\(
(p_{i-1},d_i)
\)
by the block $L_i$, whose diagonal entries are
$1$
and \(p_{i-1}s_i=p_i.
\)
Consequently, $B_i$ is again lower triangular and has diagonal
\[
\underbrace{1,\ldots,1}_{i-1},
\quad
p_i,
\quad
s_{i+1},\ldots,s_n.
\]

Repeating this step for
\(
i=2,\ldots,n
\)
gives
\[
B_n=
\begin{pmatrix}
1&0&\cdots&0\\
*&1&\ddots&\vdots\\
\vdots&\ddots&\ddots&0\\
*&\cdots&*&p_n
\end{pmatrix}.
\]
Since
\[
p_n=s_1 s_2\cdots s_n=1,
\]
the final matrix $B=B_n$ is unit lower triangular. Hence all its eigenvalues are equal to $1$, while its singular values are \(\{s_1,\ldots,s_n\}=
\{\omega_1,\ldots,\omega_n\}.
\)

Applying the orthogonal conjugation described above produces a matrix $A$ satisfying \eqref{eq:omega}, and then \eqref{eq:dampingU} gives the required damping matrix. 

The general case, when the product $\omega_1\cdot \omega_2 \cdots \omega_n$ is not necessarily equal to 1, can be deduced from the above algorithm rescaling the time variable by $\det(\Omega^2)$.
Hence, we have obtained a constructive proof of the following theorem.
\begin{theorem}
\label{th:main}
For every positive definite stiffness matrix $\Omega^2$, there is a damping matrix $\dm$, such that  the system \eqref{eq:main} attains optimal damping rate 
\[
r_{cr} = -(\omega_1 \cdots \omega_n)^{1/n}
\]
\end{theorem}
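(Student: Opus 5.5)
The proof has two halves: a lower bound, which is the Lemma of Section~\ref{sec:prelim}, and attainability, which comes from the construction of this section. By that Lemma, every symmetric $\dm$ has some characteristic exponent with $\Re(r_j)\ge-\omega_*$, where $\omega_*=(\omega_1\cdots\omega_n)^{1/n}$. So $r_{cr}\ge-\omega_*$, and it suffices to exhibit one symmetric $\dm$ for which all $2n$ roots of $P(r)=\det(Ir^2+2\dm r+\Omega^2)$ equal $-\omega_*$.

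First I reduce to the normalized case. I set $\tilde\omega_i=\omega_i/\omega_*$, so that $\prod\tilde\omega_i=1$, and rescale time by $\tau=\omega_* t$. Equation \eqref{eq:main} becomes
\[
\X''+2(\dm/\omega_*)\X'+\tilde\Omega^2\X=0 .
\]
If $\tilde\dm$ gives characteristic polynomial $(r+1)^{2n}$ for $\tilde\Omega^2$, then $\dm=\omega_*\tilde\dm$ satisfies
\[
P(r)=\omega_*^{2n}\det\!\bigl(I(r/\omega_*)^2+2\tilde\dm(r/\omega_*)+\tilde\Omega^2\bigr)=(r+\omega_*)^{2n}.
\]
This step is only bookkeeping.

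In the normalized case I follow the scheme of Section~\ref{sec:algorithm}. Since $\tilde\omega_1\ge\cdots\ge\tilde\omega_n>0$ with product $1$, the partial products satisfy $\tilde\omega_1\cdots\tilde\omega_k\ge1$, with equality at $k=n$. Horn's theorem therefore gives a real matrix $B$ with singular values $\tilde\omega_i$ and all eigenvalues equal to $1$; the Kosowski--Smoktunowicz procedure makes $B$ explicit and unit lower triangular. I pick an orthogonal $U$ with $UBB^TU^T=\tilde\Omega^2$, which is possible because both sides are symmetric positive definite with the same spectrum. Then I set $A=UBU^T$ and $\tilde\dm=\frac12(A+A^T)$. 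The key identity is
\[
(Ir+A)(Ir+A^T)=Ir^2+(A+A^T)r+AA^T=Ir^2+2\tilde\dm r+\tilde\Omega^2 .
\]
Taking determinants gives $P(r)=\det(Ir+A)\det(Ir+A^T)=(r+1)^n(r+1)^n$, because $A$ is similar to $B$ and so has the single eigenvalue $1$. Every exponent then equals $-1$, which in the original scaling is $-\omega_*$. Since $\tilde\dm$ is symmetric, this matches the lower bound and gives $r_{cr}=-\omega_*$.

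The main obstacle is to justify that the Kosowski--Smoktunowicz steps are well defined, namely the existence of an ordering $s_1,\dots,s_n$ satisfying \eqref{eq:KSorder}. I would argue this greedily. Start with the largest value $s_1\ge1$. At each later step, if the current partial product is $\ge1$, take any remaining value $\le1$; otherwise take any remaining value $\ge1$. If at some step no value of the required type remains, then all remaining values lie strictly on the same side of $1$ as the partial product. The total product would then be strictly $\neq1$, contradicting normalization. (The degenerate case where everything equals $1$ is trivial.) Two further checks are routine: that $L(a,b)$ has singular values $a,b$, which follows from $\det L=ab$ and $\|L\|_F^2=a^2+b^2$; and that the conjugated blocks preserve lower-triangularity.

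A subtlety worth flagging is that "optimal damping rate" refers to the maximal real part of the exponents. A defective Jordan structure only adds polynomial factors $t^k e^{-\omega_* t}$, so the exponential rate is still exactly $\omega_*$.
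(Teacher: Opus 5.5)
Your proposal is correct and follows essentially the same route as the paper: the Vieta lower bound from the Lemma, then the factorization $Ir^2+2\dm r+\Omega^2=(Ir+A)(Ir+A^T)$ with $A=UBU^T$, where $B$ is a unit lower triangular matrix with singular values $\omega_i$ obtained from Horn's theorem and the Kosowski--Smoktunowicz procedure, and finally a time rescaling that reduces to the case $\prod\omega_i=1$. Your greedy argument for the existence of the ordering \eqref{eq:KSorder} and your determinant/Frobenius-norm check that $L(a,b)$ has singular values $a,b$ supply details the paper only asserts, and your rescaling by $\omega_*$ is the correct form of the paper's loosely stated ``rescaling the time variable by $\det(\Omega^2)$''.
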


\begin{corollary}
    For the original system \eqref{eq:main1}, the critical damping exponent is given by \eqref{eq:main*}.
\end{corollary}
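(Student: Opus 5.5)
The corollary follows by reducing \eqref{eq:main1} to the canonical form \eqref{eq:main} and combining the a priori lemma with Theorem~\ref{th:main}. The one point to check is that the reduction changes neither the characteristic exponents nor the relevant determinants.

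\textbf{Reduction.} Let $M=LL^T$ be the Cholesky factorization, and set $\mathbf{y}=L^T\X$. Multiplying \eqref{eq:main1} on the left by $L^{-1}$ gives
\[
\ddot{\mathbf{y}}+2L^{-1}CL^{-T}\dot{\mathbf{y}}+L^{-1}KL^{-T}\mathbf{y}=0.
\]
Next choose an orthogonal $Q$ diagonalizing the symmetric positive definite matrix $L^{-1}KL^{-T}$, so that $Q^TL^{-1}KL^{-T}Q=\Omega^2$. With $S=L^{-T}Q$ this becomes \eqref{eq:main}, where $\dm=S^TCS$. The map $C\mapsto S^TCS$ is a bijection of the symmetric matrices, so minimizing over $C$ is the same as minimizing over $\dm$.

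\textbf{Exponents are unchanged.} The characteristic polynomials are related by
\[
\det(Mr^2+2Cr+K)=\det(S)^{-2}\det(Ir^2+2\dm r+\Omega^2),
\]
and $\det(S)^{-2}=\det(M)$ is a nonzero constant. Hence both systems have the same roots, and $r_{\mathrm{cr}}$ is the same for \eqref{eq:main1} and \eqref{eq:main}. Taking determinants in $S^TKS=\Omega^2$ gives
\[
\det(\Omega^2)=\det(K)\det(S)^2=\frac{\det(K)}{\det(M)}.
\]
Therefore $\omega_*=\sqrt[2n]{\det(K)/\det(M)}$.

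\textbf{Conclusion.} The lemma of Section~\ref{sec:prelim} gives $r_{\mathrm{cr}}\ge-\omega_*$. Theorem~\ref{th:main} gives a symmetric $\dm$ whose characteristic polynomial is $(r+\omega_*)^{2n}$. For this choice every exponent has real part exactly $-\omega_*$, so $r_{\mathrm{cr}}\le-\omega_*$. Pulling $\dm$ back via $C=S^{-T}\dm S^{-1}$ gives the optimal damping matrix for \eqref{eq:main1}, and \eqref{eq:main*} follows.

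\textbf{Scaling in Theorem~\ref{th:main}.} The theorem's construction is normalized to $\prod\omega_i=1$, so its use for general frequencies relies on a scaling step. Write $\omega_i=\omega_*\tilde\omega_i$, where $\prod\tilde\omega_i=1$, and build $\tilde A$ with $\tilde A\tilde A^T=\tilde\Omega^2$ and spectrum $\{1\}$. Then $A=\omega_*\tilde A$ satisfies $AA^T=\Omega^2$ and has spectrum $\{\omega_*\}$. With $\dm=\tfrac12(A+A^T)$,
\[
P(r)=\det(Ir+A)\det(Ir+A^T)=(r+\omega_*)^{2n}.
\]
The factorization of $P(r)$ holds because $Ir^2+2\dm r+\Omega^2=(Ir+A)(Ir+A^T)$ when $AA^T=\Omega^2$. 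With this scaling in place, the remaining steps are routine linear algebra.
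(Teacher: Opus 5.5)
Your proof is correct and follows the same route as the paper. Both reduce \eqref{eq:main1} to the canonical form \eqref{eq:main} by a congruence, use $\det(\Omega^2)=\det(K)/\det(M)$, and then invoke the a priori lemma together with Theorem~\ref{th:main}; the only difference is that you use the Cholesky factor $L$ plus an orthogonal diagonalization where the paper uses $M^{-1/2}$, and you make explicit what the paper leaves implicit (preservation of the characteristic exponents, bijectivity of $C\mapsto S^TCS$ on symmetric matrices, and removal of the normalization $\prod\omega_i=1$ via $A=\omega_*\tilde A$).
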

\begin{proof}
By multiplying with $M^{-1/2}$ on the left and by applying the transformation ${\bf x} = M^{-1/2} {\bf y}$, we obtain the equation
\[
I \ddot {\bf y} + 2 \dm {\bf y} + (M^{-1/2} K M^{-1/2}) {\bf y} =0.
\]
Now, applying the theorem we obtain that the optimal decay rate is given by 
\[
r_{cr} = - 
\sqrt[2n]{\det\!\left(M^{-\frac 1 2}\, K \, M^{- \frac 1 2}\right)} = -
\sqrt[2n]{\dfrac{\det(K)}{\det(M)}} .
\]

\end{proof}

\section{Examples}
\label{sec:examples}
Here we discuss applications of our construction to  low dimensional cases of the system \eqref{eq:main}. For the sake of brevity,  we will assume without loss of generailty \(\omega_1\ge\omega_2\ge\cdots\ge\omega_n>0,
\)
and
\(
\prod \omega_i=1.
\)
\subsection{Two degrees of freedom}

We first apply the construction of Section~\ref{sec:algorithm} to the two-dimensional case, where the optimal damping matrix is already known explicitly. 
The maximal multiplicity method constructs unit triangular matrix
\(
B=
\begin{pmatrix}
1&0\\
b&1
\end{pmatrix}
\)
with singular values $\omega_1$ and $\omega_2$. Hence we obtain
\(
b
=
\sqrt{\omega_1^2+\omega_2^2-2}
=
\omega_1-\omega_2.
\)
Thus
\[
BB^T
=
\begin{pmatrix}
\omega_1^2+\omega_2^2-1&\omega_1-\omega_2\\
\omega_1-\omega_2&1
\end{pmatrix}.
\]

An orthogonal matrix diagonalizing $BB^T$ is
\[
U
=
\frac{1}{\sqrt{1+\omega_1^2}}
\begin{pmatrix}
\omega_1&1\\
-1&\omega_1
\end{pmatrix}.
\]
Therefore
\[
\dm
=
\frac12U(B+B^T)U^T
=
\frac{1}{\omega_1+\omega_2}
\begin{pmatrix}
2\omega_1&
\dfrac12(\omega_1-\omega_2)^2\\[4pt]
\dfrac12(\omega_1-\omega_2)^2&
2\omega_2
\end{pmatrix}.
\]

This coincides with the damping matrix obtained in Section~\ref{sec:algorithm}, after imposing the normalization
\(
\omega_1\omega_2=1.
\)

\begin{remark}
In the two-dimensional case the optimal damping matrix is unique up to the sign of its off-diagonal entry. Although the Horn construction involves orthogonal transformations, the conditions
\[
\operatorname{spec}(A)=\{1\},
\qquad
AA^T=\operatorname{diag}(\omega_1^2,\omega_2^2)
\]
determine $A+A^T$, and hence $C$, uniquely up to this sign change.
\end{remark}

\subsection{Three degrees of freedom}

To give an explicit formula, we consider the case
\[
\omega_1>\omega_2>1>\omega_3>0.
\]
The remaining orderings, as well as the limiting cases, can be treated similarly.

Applying the algorithm from the preceeding section, we obtain the unit lower triangular matrix
\[
B=
\begin{pmatrix}
1&0&0\\[10pt]
-\sqrt{\dfrac{(\omega_1^2-1)(1-\omega_3^2)}
{\omega_1^2\omega_3^2(1+\omega_2^2)}}&1&0\\[12pt]
\sqrt{\dfrac{(\omega_1^2-1)(1-\omega_3^2)}
{1+\omega_2^2}}
&
\dfrac{\omega_2^2-1}{\omega_2}
&1
\end{pmatrix}.
\]
Using our asumption
\(
\omega_3=\frac{1}{\omega_1\omega_2}
\)
and introducing
\[
P:=(\omega_1^2-1)(\omega_1^2\omega_2^2-1),
\]
this can be written more compactly as
\[
B=
\begin{pmatrix}
1&0&0\\[10pt]
-\dfrac{\sqrt P}{\omega_1\sqrt{1+\omega_2^2}}
&1&0\\[12pt]
\dfrac{\sqrt P}
{\omega_1\omega_2\sqrt{1+\omega_2^2}}
&
\dfrac{\omega_2^2-1}{\omega_2}
&1
\end{pmatrix}.
\]

A direct computation gives
\[
B^TB=
\begin{pmatrix}
\omega_1^2+\omega_3^2-\dfrac1{\omega_2^2}
&
-\dfrac{\sqrt P}
{\omega_1\omega_2^2\sqrt{1+\omega_2^2}}
&
\dfrac{\sqrt P}
{\omega_1\omega_2\sqrt{1+\omega_2^2}}
\\[14pt]
-\dfrac{\sqrt P}
{\omega_1\omega_2^2\sqrt{1+\omega_2^2}}
&
\omega_2^2-1+\dfrac1{\omega_2^2}
&
\dfrac{\omega_2^2-1}{\omega_2}
\\[14pt]
\dfrac{\sqrt P}
{\omega_1\omega_2\sqrt{1+\omega_2^2}}
&
\dfrac{\omega_2^2-1}{\omega_2}
&
1
\end{pmatrix}.
\]

Let
\(
\alpha
=
\sqrt{
\frac{\omega_1^2-1}
{\omega_1^2-\omega_3^2}},
\) 
\(
\beta
=
\sqrt{
\frac{1-\omega_3^2}
{\omega_1^2-\omega_3^2}},
\)
so that
\(
\alpha^2+\beta^2=1.
\)
An orthogonal matrix diagonalizing $B^TB$ is
\[
U=
\frac{1}{\sqrt{1+\omega_2^2}}
\begin{pmatrix}
\beta\sqrt{1+\omega_2^2}
&0&
\alpha\sqrt{1+\omega_2^2}
\\[8pt]
-\omega_2\alpha
&1&
\omega_2\beta
\\[8pt]
\alpha
&\omega_2
&-\beta
\end{pmatrix},
\]
and hence
\[
U^TB^TBU=\Omega^2.
\]

\begin{remark}
The expressions for $\alpha$ and $\beta$ appear singular as
$\omega_1\to\omega_3$. 
However, this can occur only when all three frequencies tend to $1$. The completely degenerate case
\(
\omega_1=\omega_2=\omega_3=1
\)
can be treated separately and admits proportional damping.
\end{remark}

\begin{remark}

The matrix $U$ also admits a simple factorization. Introduce angles $\phi$ and $\psi$ by
\(
\alpha=\cos\phi,
\) \(
\beta=\sin\phi,
\)
\(
\tan\psi=\omega_2.
\)
Then
\[
U=
\begin{pmatrix}
1&0&0\\
0&\cos\psi&-\sin\psi\\
0&\sin\psi&\cos\psi
\end{pmatrix}
\begin{pmatrix}
\sin\phi&0&\cos\phi\\
0&1&0\\
\cos\phi&0&-\sin\phi
\end{pmatrix}.
\]
\end{remark}
The corresponding damping matrix is
\[
\dm=\frac12U^T(B^T+B)U.
\]
Its entries simplify considerably if we introduce
\[
\delta
=
\frac{(\omega_2^2-1)^2}
{2\omega_2(1+\omega_2^2)}.
\]
Then
\[
\dm=
\begin{pmatrix}
\omega_1^2\beta^2+\dfrac{2\alpha^2}{1+\omega_2^2}
&
-\delta\alpha
&
\alpha\beta\,\dfrac{3-\omega_2^2}{1+\omega_2^2}
\\[12pt]
-\delta\alpha
&
\dfrac{2\omega_2^2}{1+\omega_2^2}
&
\delta\beta
\\[12pt]
\alpha\beta\,\dfrac{3-\omega_2^2}{1+\omega_2^2}
&
\delta\beta
&
\omega_3^2\alpha^2+\dfrac{2\beta^2}{1+\omega_2^2}
\end{pmatrix}.
\]

\subsection{Non-uniqueness}

The optimal damping matrix in the two-dimensional case is essentially unique, but this property does not persist in higher dimensions. A simple dimension count already suggests that optimal damping matrices should not be unique for $n\ge3$. We do not attempt a general classification here, but give an explicit example for three degrees of freedom. Let
\(
\Omega^2=
\operatorname{diag}\left(4,1,\frac14\right).
\)

One can verify that 
\[
\dm_1
=
\dfrac{1}{20}\begin{pmatrix}
32&0&-9\\[6pt]
0&20&0\\[6pt]
-9&0&8
\end{pmatrix}
\]
 and
\[
\dm_2
=
\frac1{14}
\begin{pmatrix}
48&-8&-5\\
-8&24&-4\\
-5&-4&12
\end{pmatrix}
\]

are optimal damping matrices. Matrix $\dm_2$ is passive in the sense of Definition~\ref{def:passive} while $\dm_1$ contains active damping elements.

More generally, one can show that for
\(
\Omega^2=
\operatorname{diag}(a^2,1,a^{-2}),
\)
an optimal damping matrix can be positive definite only if
\[
a+a^{-1}<4.
\]
Since every passive damping matrix is positive definite, no passive optimal damping matrix can exist when
\(
a+a^{-1}\ge4.
\)

\section{Mechanical applications}
\label{sec:applications}

\subsection{Double car suspension problem}

At first sight, one might expect that in many mechanical applications comparable masses and spring constants lead to comparable natural frequencies, in which case proportional damping would be close to optimal. The standard double-suspension model shows that this need not be the case.

\begin{figure}[!hbt]
\centering

\begin{tikzpicture}[scale=0.7,
thick,
spring/.style={decorate,decoration={
coil,
aspect=0.7,
segment length=2mm,
amplitude=1mm,
pre length=2mm,
post length=2mm}},
mass/.style={
draw,
fill=gray!25,
minimum width=1.1cm,
minimum height=0.8cm
},
>=stealth
]

\draw[very thick] (0,-1.45) -- (0,1.15);
\foreach \y in {-1.35,-1.05,...,1.05}
\draw (-0.25,\y-0.18) -- (0,\y);

\node[mass] (M1) at (4.3,0) {$m_1$};
\node[mass] (M2) at (9.2,0) {$m_2$};

\draw[spring] (0,0.30) -- (M1.west |- 0,0.30);
\node[above] at (1.5,0.55) {$k_1$};

\draw (0,-0.25) -- (1.05,-0.25);
\draw (1.05,-0.45) -- (1.05,-0.05);
\draw (1.05,-0.45) -- (1.80,-0.45);
\draw (1.05,-0.05) -- (1.80,-0.05);
\draw (1.30,-0.25) -- (2.15,-0.25);
\draw (1.30,-0.42) -- (1.30,-0.08);
\node[below] at (1.3,-0.5) {$c_1$};
\draw (2.15,-0.25) -- (M1.west |- 0,-0.25);

\draw[spring]
(M1.east |- 0,0.30)
--
(M2.west |- 0,0.30);
\node[above] at (6.7,0.55) {$k_2$};

\draw (M1.east |- 0,-0.25) -- (5.75,-0.25);
\draw (5.75,-0.45) -- (5.75,-0.05);
\draw (5.75,-0.45) -- (6.50,-0.45);
\draw (5.75,-0.05) -- (6.50,-0.05);
\draw (6.00,-0.25) -- (6.85,-0.25);
\draw (6.00,-0.42) -- (6.00,-0.08);
\node[below] at (6.1,-0.5) {$c_{12}$};
\draw (6.85,-0.25) -- (M2.west |- 0,-0.25);

\draw (0,-1.15) -- (2.70,-1.15);
\draw (2.70,-1.35) -- (2.70,-0.95);
\draw (2.70,-1.35) -- (3.45,-1.35);
\draw (2.70,-0.95) -- (3.45,-0.95);
\draw (2.95,-1.15) -- (3.80,-1.15);
\draw (2.95,-1.32) -- (2.95,-0.98);
\node[below] at (2.95,-1.35) {$c_2$};

\draw (3.80,-1.15) -- (M2.west|-0,-1.15);
\draw (M2.west|-0,-1.15) -- (M2.west|-0,-0.38);

\end{tikzpicture}

\caption{Double-suspension system with two springs and three damping elements.}
\label{fig:suspension}
\end{figure}

Systems of this type arise frequently in vehicle dynamics; see, for example, \cite{lozia2016,mitra2016}. A schematic diagram is shown in Figure~\ref{fig:suspension}.

The equations of motion are
\begin{equation}
\label{eq:doublesusp}
\begin{cases}
m_1\ddot x_1
+c_1\dot x_1
+c_{12}(\dot x_1-\dot x_2)
+k_1x_1+k_2(x_1-x_2)=0,
\\[2mm]
m_2\ddot x_2
+c_2\dot x_2
+c_{12}(\dot x_2-\dot x_1)
+k_2(x_2-x_1)=0.
\end{cases}
\end{equation}

After the standard mass normalization, the system can be written with unit mass matrix. To simplify the formulas below, we assume
\(
m_1=m_2=1,
\)
without changing the names of other parameters. 
The stiffness matrix is then
\[
K=
\begin{pmatrix}
k_1+k_2&-k_2\\
-k_2&k_2
\end{pmatrix}.
\]
Its eigenvalues are
\(
\lambda_{1,2}
=
\dfrac{
k_1+2k_2
\pm
\sqrt{k_1^2+4k_2^2}
}{2}.
\)

\begin{remark}
Even when the masses and spring constants are equal, the two natural frequencies can be well separated. For example, if
\(
m_1=m_2=1,\)
 \(k_1=k_2=1,
\)
then
\[
\lambda_1=\frac{3+\sqrt5}{2}\approx2.62,
\qquad
\lambda_2=\frac{3-\sqrt5}{2}\approx0.38.
\]
Thus the corresponding frequencies are distinct, leaving room for non-proportional damping to improve the decay rate.
\end{remark}

We now diagonalize the stiffness matrix. Let
\(
\Delta=\sqrt{k_1^2+4k_2^2}
\)
and
\[
P=
\frac{1}{\sqrt{2\Delta}}
\begin{pmatrix}
\sqrt{\Delta+k_1}&\sqrt{\Delta-k_1}\\
-\sqrt{\Delta-k_1}&\sqrt{\Delta+k_1}
\end{pmatrix}.
\]
Then
\(
P^TKP
=
\operatorname{diag}(\lambda_1,\lambda_2).
\) The damping matrix in the original coordinates is
\[
C=
\begin{pmatrix}
c_1+c_{12}&-c_{12}\\
-c_{12}&c_2+c_{12}
\end{pmatrix}.
\]
In modal coordinates it becomes
\[
P^TCP
=
\frac1{\Delta}
\begin{pmatrix}
\dfrac{
a+b}{2}
&
c
\\[3mm]
c
&
\dfrac{
a-b}{2}
\end{pmatrix}.
\]
where $a=\Delta(c_1+c_2+2c_{12})
$, $b=k_1(c_1-c_2)
+4k_2c_{12}$, and $c=k_2(c_1-c_2)-k_1c_{12}$.

Although this formula is somewhat cumbersome, the essential point is simple. If the coefficients
$
c_1$, $c_2$, $c_{12}$
are allowed to take arbitrary real values, then
$C$ ranges over all symmetric $2\times2$ matrices. Since orthogonal conjugation is invertible, the same is true in modal coordinates. Thus the optimal damping matrix constructed in the previous sections can always be realized algebraically within this mechanical model. Whether it can be realized using only passive damping elements is a separate question governed by the positivity conditions discussed above.

\subsection{Chain of masses connected by identical springs}

Consider a chain of $n$ identical masses connected by identical springs with spring constant $k$, with the first and last masses attached to fixed walls. This may be viewed as a higher-dimensional analogue of the preceding system restricted to the nearest neighbor interaction.

Let each mass be equal to $m$. Allowing an arbitrary symmetric damping matrix $C$, the equations take the form
\[
m\ddot X+2C\dot X+KX=0,
\]
where
\[
K
=
k
\begin{pmatrix}
2&-1&0&\cdots&0\\
-1&2&-1&\cdots&0\\
0&-1&2&\ddots&\vdots\\
\vdots&\vdots&\ddots&\ddots&-1\\
0&0&\cdots&-1&2
\end{pmatrix}.
\]

The eigenvalues of $m^{-1}K$ are
\(
\lambda_j
=
4\frac{k}{m}
\sin^2
\frac{j\pi}{2(n+1)},\)
 Hence the natural frequencies are
\[
\omega_j
=
2\sqrt{\frac{k}{m}}
\sin
\frac{j\pi}{2(n+1)}\qquad j=1,\ldots,n.
\]

By the main theorem, the optimal decay rate is the negative geometric mean of these frequencies:
\[
r_{\rm cr}
=
-2\sqrt{\frac{k}{m}}
\left(
\prod_{j=1}^n
\sin\frac{j\pi}{2(n+1)}
\right)^{1/n}.
\]
As $n\to\infty$,
\[
r_{\rm cr}\longrightarrow
-\sqrt{\frac{k}{m}}.
\]

Thus, the limiting value
$
-\sqrt{\frac{k}{m}}
$
gives the optimal decay rate in the continuum limit of the chain.

\section{Discussion and open problems}

We have considered a classical mass--spring system with an unrestricted symmetric damping matrix. In applications, however, not all damping coefficients can necessarily be chosen independently. Once the optimal unconstrained decay rate is known, a natural next problem is to optimize the damping matrix subject to structural, sparsity, or passivity constraints. We plan to study such constrained optimization problems elsewhere.

A second question concerns uniqueness. For one and two degrees of freedom, the optimal damping matrix is essentially unique. Starting from $n=3$, this is no longer the case. As suggested by dimension count and illustrated by the examples above, there are continuous families of optimal damping matrices; in the three-dimensional case we have exhibited at least a one-parameter family. A systematic description of these families, and of their possible connected components, remains open.

Passivity imposes an additional restriction. In the two-dimensional case we obtained a complete criterion for when the optimal damping can be realized using only passive damping elements. For $n=3$, the situation is more complicated. We found explicit passive optimal damping matrices, as well as parameter ranges in which passivity is impossible, but a complete classification is still lacking. More generally, it would be useful to characterize when the optimal damping matrix can be chosen passive for a prescribed stiffness matrix.

Now, we discuss the issue  of robustness. The maximal multiplicity construction places all characteristic roots at the same point. Such a multiple root is necessarily sensitive to perturbations. A perturbation of size $\varepsilon$ typically produces a splitting on the scale $\varepsilon^{1/n}$.

The model equation
\[
(r+1)^n=0
\]
already illustrates this behavior. After a perturbation
\[
(r+1)^n=\varepsilon,
\]
one obtains
\[
r=-1+\varepsilon^{1/n}
\]
for one of the perturbed roots. Thus even a small perturbation may produce a noticeably larger displacement when the multiplicity is high.

This sensitivity is already present in the one-dimensional critically damped oscillator and becomes more pronounced as the number of degrees of freedom increases. In practical applications, it may therefore be preferable to sacrifice some decay rate in exchange for greater robustness. Determining the optimal trade-off between decay rate and spectral stability is another natural problem for further study.

Finally we address the issue of having terms of the type $t^k e^{-\omega t} $ due to the presence of multiple eigenvalues. From practical point of view such terms are undesirable as they make the solutions increase in the beginning followed by the exponential decay. This phenomenon is already observed in the one-dimensional case $n=1$. Depending on the specific application one may need to find a trade-off between rapid exponential decay  and minimizing the initial increase by splitting the eigenvalues.

\bibliographystyle{plain}
\bibliography{damping}

\end{document}